\title[$({\mathbf Q}, +)$ is not automatic]{The additive group of the rationals does not have an automatic presentation}
\author{Todor Tsankov}
\date{April 2009}
\address{Analyse fonctionnelle, bo{\^ \i}te 186 \\ Universit{\' e} Paris 6 \\
4 Place Jussieu \\ 75252 Paris CEDEX 05 \\ France}
\email{todor@math.jussieu.fr}
\keywords{automatic structures, FA-presentable, abelian groups, additive combinatorics}
\newtheorem{theorem}{Theorem}
\newtheorem*{theorem*}{Theorem}
\newtheorem{lemma}[theorem]{Lemma}
\theoremstyle{definition}
\newtheorem{defn}[theorem]{Definition}
\theoremstyle{remark}
\newtheorem{remark}[theorem]{Remark}
\numberwithin{equation}{section}
\newcommand{\N}{{\mathbf N}}
\newcommand{\R}{{\mathbf R}}
\newcommand{\Z}{{\mathbf Z}}
\newcommand{\Q}{{\mathbf Q}}
\newcommand{\sub}{\subseteq}
\newcommand{\sminus}{\setminus}
\newcommand{\boplus}{\textstyle\bigoplus}
\newcommand{\set}[1]{\{#1\}}
\newcommand{\genby}[1]{\langle#1\rangle}
\newcommand{\nm}[1]{\left\| #1 \right\|}
\DeclareMathOperator{\len}{len}
\DeclareMathOperator{\vol}{vol}
\DeclareMathOperator{\Span}{span}
\DeclareMathOperator{\rank}{rank}
\DeclareMathOperator{\ord}{ord}
\newcommand{\df}{\emph}
\newcommand{\romanenum}{\renewcommand{\theenumi}{\roman{enumi}}}
\renewcommand{\And}{\text{ and }}
\begin{document}

\begin{abstract}
We prove that the additive group of the rationals does not have an automatic presentation. The proof also applies to certain other abelian groups, for example, torsion-free groups that are $p$-divisible for infinitely many primes $p$, or groups of the form $\boplus_{p \in I} \Z(p^\infty)$, where $I$ is an infinite set of primes.
\end{abstract}

\maketitle

\section{Introduction}
Consider the basic algorithm for adding two integers that we are taught in elementary school: write the two numbers in decimal, align them on the right, and add them digit by digit (using an addition table), carrying only one bit of information from one position to the next. What is remarkable about this procedure is that we can add two very long integers, digit by digit, using only local information and a bounded amount of memory. It becomes interesting to understand what other mathematical structures admit an encoding such that one can perform the operations using a similarly simple algorithm. This idea is formalized by the notion of an \df{automatic} (or \df{FA-presentable}) structure which is defined as follows.

Fix a finite alphabet $\Sigma$ and denote by $\Sigma^*$ the set of all finite words formed by letters of $\Sigma$. A \df{language} is a subset of $\Sigma^*$. A language is called \df{regular} if there exists a finite automaton that recognizes it. The following definition was first considered by Hodgson~\cite{Hodgson1982} and the basic theory of automatic structures was later developed by Khoussainov and Nerode~\cite{Khoussainov1995}.
\begin{defn} \label{d:autom}
A countable, relational structure $(M; R_1, \ldots, R_k)$, where $M$ is the universe of the structure and $R_1, \ldots, R_k$ are the relations, is called \df{automatic} if there exists a regular language $D \sub \Sigma^*$ and a bijection $g \colon D \to M$ such that the relations $g^{-1}(R_1), \ldots, g^{-1}(R_k)$ are also regular.
\end{defn}
In order to make sense of what it means for $g^{-1}(R_i)$ to be regular, one has to specify how to represent $(\Sigma^*)^n$ as a set of words in a finite alphabet. The standard approach is to use \df{padding}: add a special symbol $\diamond$ to the alphabet and embed $(\Sigma^*)^n$ into $((\Sigma \cup \set{\diamond})^n)^*$ by appending $\diamond$s at the end of the shorter words in the $n$-tuple so that all words become of equal length. Everywhere below where we mention regular subsets of $(\Sigma^*)^n$, we are using this convention. For more details, see any of the papers \cites{Khoussainov1995, Khoussainov2007, Khoussainov2008p}. In Definition~\ref{d:autom}, one can relax the condition on $g$ and allow it to be only a surjection but then equality in $M$ has to be regular. Also, one can include in the definition structures with function symbols by considering the graphs of the functions as relations. This will be important for us because we will be mostly concerned with algebraic structures.

Automatic structures are also attractive from another point of view: since the class of regular languages is stable under Boolean operations and projections, one readily sees that for any first order formula $\phi(\bar{x})$, the set $\set{\bar{a} \in D^n : D \models \phi(\bar{a})}$ is a regular language and, moreover, one can construct algorithmically an automaton recognizing it starting from the formula $\phi$ and the automata for the basic relations. In particular, the first order theories of automatic structures are decidable. One can also extend the first order language by the additional quantifiers ``there exist infinitely many'' and ``there exist $m$ modulo $n$'' and keep this decidability property. For all of this and some additional background, see Rubin's thesis \cite{Rubin2004} or the recent survey Khoussainov--Minnes~\cite{Khoussainov2008p}.

The condition of admitting an automatic presentation turns out to be rather restrictive. If one allows rich algebraic structure in the language, then often the only automatic structures are the trivial ones. For example, every automatic Boolean algebra is either finite or a finite power of the algebra of finite and co-finite subsets of $\N$ and all automatic integral domains (in the language of rings) are finite (Khoussainov--Nies--Rubin--Stephan~\cite{Khoussainov2007}; for more detailed information on automatic rings, see also Nies--Thomas~\cite{Nies2008}).

Even if one considers simpler algebraic structures such as groups, the definition is still too restrictive: Oliver and Thomas~\cite{Oliver2005} observed, as a consequence of Gromov's theorem about finitely generated groups of polynomial growth and a theorem of Romanovski{\u \i} classifying the virtually polycyclic groups with decidable first order theory, that a finitely generated group has an automatic presentation (in the sense of Definition~\ref{d:autom}) iff it is virtually abelian. This was extended by Nies and Thomas~\cite{Nies2008} who showed that every finitely generated subgroup of an automatically presentable group is virtually abelian.

However, for finitely generated groups, there is a convenient alternative. A different notion of an \df{automatic group}, in which the alphabet is a set of generators for the group, each word represents the corresponding product of generators, and one further requires that equality in the group and right multiplication by a generator be verifiable by automata, was introduced by Cannon and Thurston in the 1980s (see Epstein et al.~\cite{Epstein1992} for the precise definition and more details) and has led to a rich and interesting theory. In order to avoid confusion, we will adopt the terminology from \cite{Nies2008} and call a group with an automatic presentation in the sense of Definition~\ref{d:autom} \df{FA-presentable}.

In view of the remarks above, it seems that the natural class of groups for which one wants to consider FA-presentability is the class of abelian groups and this is where we will concentrate our attention from now on. There are already some interesting known examples. Finite groups are of course FA-presentable and an infinite direct sum of copies of $\Z/p\Z$ is also FA-presentable. By using the idea of ``addition with carry,'' one can construct presentations for $\Z$ and $\Z(p^\infty) = \set{x \in \Q / \Z : \exists k\ p^kx = 0}$. The class of FA-presentable groups is stable under finite sums (so all finitely generated abelian groups are FA-presentable) and one can combine a presentation of $\Z$ with a presentation of $\boplus_{p \mid n} \Z(p^\infty)$ to construct a presentation of $\Z[1/n] = \set{a/n^k \in \Q : a, k \in \Z}$. The class of FA-presentable abelian groups is also stable under taking finite extensions and, more interestingly, under ``automatic amalgamation'' (Nies--Semukhin~\cite{Nies2007a}) which provides some further examples. Currently, there are fairly few known ways to show that an abelian group does not admit an automatic presentation: the only abelian groups with a decidable first order theory known to not be FA-presentable were the ones containing a free abelian group of infinite rank~\cite{Khoussainov2007}. In this paper, we describe some new restrictions on possible automatic presentations of abelian groups. The following is our main theorem which answers a question of Khoussainov (see, e.g., \cite{Khoussainov2008}).
\begin{theorem} \label{th:main}
The following groups are not FA-presentable:
\begin{enumerate} \romanenum
\item $(\Q, +)$, or, more generally, any torsion-free abelian group that is $p$-divisible for infinitely many primes $p$; \label{eq:main:div}
\item $(\Q / \Z, +)$, or, more generally, any group of the form $\boplus_{p \in I} \Z(p^\infty)$, where $I$ is an infinite set of primes. \label{eq:main:QZ}
\end{enumerate}
\end{theorem}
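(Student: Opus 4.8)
The plan is to derive a contradiction by combining a counting argument coming from the finite-state machinery with structural results from additive combinatorics. Fix an FA-presentation of such a group $G$, let $D \sub \Sigma^*$ be the domain, and for each $n$ let $B_n \sub G$ be the set of elements coded by words of length at most $n$. Two standard consequences of regularity drive everything: first, $\abs{B_n} \le \lambda^n$ for some constant $\lambda$, since $D$ is regular; second, because the graph of $+$ is regular, an output word cannot be much longer than its inputs, so there is a constant $c$ with $B_n + B_n \sub B_{n+c}$. I would record these as the basic ``geometry'' of the presentation.

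Next I bring in divisibility. For each prime $p$ for which $G$ is $p$-divisible, multiplication by $p$ is a bijection of $G$ (injectivity uses torsion-freeness), and both it and its inverse are first-order definable from $+$, hence automatic; therefore there is a constant $c_p$ with $\abs{\len(px)-\len(x)} \le c_p$ for all $x$. In particular every element of $B_n$ is $p$ times an element of $B_{n+c_p}$, i.e. $B_n \sub p \cdot B_{n+c_p}$. The first main step is then a pigeonhole argument: since $\abs{B_n} \le \lambda^n$ grows at most exponentially while $B_n + B_n \sub B_{n+c}$, the doubling ratios $\abs{B_{n+c}}/\abs{B_n}$ cannot all be large, so there is an infinite set of scales $n$ along which $\abs{B_n + B_n} \le K\abs{B_n}$ for a fixed $K$. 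Along this subsequence the sets $B_n$ have bounded doubling.

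Now I apply Freiman's theorem in torsion-free abelian groups: each such $B_n$ is contained in a proper generalized arithmetic progression $P_n$ of rank at most $d = d(K)$ with $\abs{P_n} \le C\abs{B_n}$, where $d$ and $C$ depend only on $K$. Since $P_n$ has bounded rank, the subgroup $\genby{B_n}$ has rank at most $d$, so after tensoring with $\Q$ I may work inside a fixed finite-dimensional space $V \cong \Q^d$ where multiplication by $p$ is honest scalar multiplication. The relation $B_n \sub p \cdot B_{n+c_p}$ now says that the lattice generated by $B_n$ refines into $p$ times a lattice of comparable covolume attached to $B_{n+c_p}$; comparing covolumes, divisibility by $p$ forces the covolume of $\genby{B_n}$ to carry a factor $p^d$ up to bounded index. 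The heart of the proof, and the step I expect to be the main obstacle, is to turn this single-prime computation into a contradiction using infinitely many primes simultaneously: a fixed rank-$d$ lattice sitting inside $G$ cannot be a bounded-index $p$-refinement of a sublattice of $G$ for infinitely many primes $p$ at once, since that would force a fixed nonzero generator to be divisible in $G$ by arbitrarily large products of distinct primes. Making this volume and lattice bookkeeping precise while tracking that the multiplicative notion of $p$-divisibility genuinely survives the passage through Freiman's theorem---which respects only additive structure---is where the real work lies.

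Finally, for part (ii) the group is divisible and torsion rather than torsion-free, so multiplication by $p$ is no longer injective and the argument above must be replaced by its ``dual.'' Here I would count instead the finite subgroups of $\boplus_{p\in I}\Z(p^\infty)$: an automatic presentation makes the $p$-torsion layers automatic, and the analogous bounded-doubling analysis constrains how many primes can contribute, contradicting that $I$ is infinite. I expect part (ii) to follow the same template---exponential counting, extraction of a bounded-doubling scale, additive-combinatorial structure, and a final arithmetic incompatibility with infinitely many primes---with the torsion-free lattice argument replaced by a count inside the $\F_p$-vector spaces $\set{x : px = 0}$.
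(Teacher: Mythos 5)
Your setup is exactly the paper's: the sets $B_n$, the window $B_n + B_n \sub B_{n+c}$ coming from regularity of the graph of addition, and the divisibility window $B_n \sub p\cdot B_{n+c_p}$ coming from the automaticity of $M_p(x)=px$ all appear in the paper as properties (i)--(iv) of the sets $A_n$, and the appeal to Freiman's theorem is also the paper's first move. (One secondary weakening: the paper uses a growth lemma for regular languages giving $|D^{\leq n+1}| \leq C|D^{\leq n}|$ at \emph{every} scale, whence bounded doubling for all $n$; your pigeonhole only gives bounded doubling along a subsequence, which is a real nuisance later because the scales that matter are dictated by when $p$-adic norms jump, not freely chosen.)

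The genuine gap is the step you yourself flag as ``where the real work lies,'' and the direction you sketch for it would fail. You propose to contradict a divisibility statement: that a bounded-index $p$-refinement for infinitely many $p$ ``would force a fixed nonzero generator to be divisible in $G$ by arbitrarily large products of distinct primes.'' But $\Q$ \emph{is} divisible --- every nonzero rational is divisible by every product of distinct primes --- so this is no contradiction at all; moreover there is no fixed lattice in your bookkeeping, since the groups $\genby{B_n}$ genuinely refine as $n$ grows (denominators accumulate), and covolume comparisons never interact with Freiman's conclusion, which is a statement about \emph{cardinality} efficiency ($|P_n| \leq C|B_n|$), not covolume. What the paper contradicts instead is the Freiman bound itself: it proves a quantitative rank-increment lemma (Lemma~\ref{l:incr-rank}) saying that if every element of a finite set $A$ has $p$-adic norm smaller than $\nm{z}_p$, then any proper rank-$d$ progression covering $A \cup \set{z}$ has covering ratio at least $\min\set{p^{1/d}/(4d),\ \theta(A,d-1)/d^{C_0d^3}}$ --- intuitively, a progression direction reaching the large-norm element $z$ wastes all but a $1/p$ fraction of its points, unless the small-norm part of the progression already had rank $\leq d-1$. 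The divisibility hypothesis enters only to guarantee (via $p^{-1}A_n \sub A_{n+h(p)}$) that an element of larger $p_i$-adic norm appears within a \emph{bounded} number of steps, during which $|A_n|$ grows by a controlled factor $C^{h(p_i)}$; choosing $d+1$ primes $p_d < \cdots < p_0$ inductively large relative to $C$, $K$, $d$, and $h(p_{i})$, and inducting on the rank budget, pushes $\theta(A_{n_d},d)$ above the Freiman constant $K$. Without this lemma (or a substitute connecting $p$-adic valuations to progression-covering efficiency) the proof does not close. Your sketch for part (ii) has the same problem in sharper form: counting $p$-torsion layers gives no contradiction by itself; the paper instead defines $\nm{x}_p = \ord \pi_p(x)$, checks it satisfies the same norm axioms so Lemma~\ref{l:incr-rank} still applies, and uses the Green--Ruzsa version of Freiman's theorem, converting coset progressions to progressions of rank $d+1$ because finite subgroups of $\boplus_{p \in I}\Z(p^\infty)$ are cyclic.
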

Some partial results providing restrictions on possible automatic presentations of $\Q$ and $\Q / \Z$ had been proved by F.~Stephan (see \cite{Nies2007}).

The ideas for the proof of Theorem~\ref{th:main} are combinatorial. Our main tool is Freiman's structure theorem for sets with a small doubling constant.

The organization of the paper is as follows. In Section~\ref{s:add-c}, we discuss some preliminary notions and facts from additive combinatorics; in Section~\ref{s:proof}, we prove Theorem~\ref{th:main} for the case of the rationals; and finally, in Section~\ref{s:other}, we indicate how to modify the proof in order to obtain the other instances of Theorem~\ref{th:main}.

Below, $\N$, $\Z$, $\Q$, and $\R$ will denote the sets of the natural numbers, the integers, the rationals, and the reals, respectively. If $A$ is a finite set, $|A|$ denotes its cardinality.

{\bf Acknowledgements. } I am grateful to B.~Khoussainov for pointing out an error in a preliminary draft of this paper, making many useful comments, and suggesting some references.

\section{Preliminaries from additive combinatorics} \label{s:add-c}
Our main reference for results in additive combinatorics is the book by Tao and Vu~\cite{Tao2006}.

Let $Z$ be an abelian group. We will be interested in finite sets $A \sub Z$ such that their doubling $A+A = \set{a_1 + a_2 : a_1, a_2 \in A}$ is small, i.e., $|A+A| \leq C |A|$ for some constant $C$ (such sets naturally arise from automatic presentations of $Z$ as we shall see shortly). Typical sets with this property are arithmetic progressions and, more generally, multi-dimensional arithmetic progressions. By a remarkable theorem of Freiman~\cite{Freiman1966}, these are essentially the only examples. In order to state the theorem, we recall a few basic definitions. A \df{generalized arithmetic progression} (or just a progression, for short) in an abelian group $Z$ is a pair $(P, \phi)$, where $P$ is a finite subset of $Z$ and $\phi$ is an affine map from a parallelepiped in $\Z^d$ onto $P$, i.e.,
\[ P = \set{ v_0 + \sum_{i=1}^d a_i v_i : 0 \leq a_i < N_i \text{ for } i = 1, \ldots, d}, \]
where $v_0, v_1, \ldots, v_d \in Z$ and $N_1, \ldots, N_d \in \N$ (and of course, $\phi(a_1, \ldots, a_d) = v_0 + \sum_{i=1}^d a_i v_i$). We will often suppress $\phi$ if it is clear from the context. The number $d$ is called the \df{rank} of the progression. Progressions of rank $1$ are just ordinary arithmetic progressions. A progression is called \df{proper} if $\phi$ is injective. Also, if $N = (N_1, \ldots, N_d)$, we will write $[0, N)$ for the parallelepiped $\prod_{i=1}^d [0, N_i)$ in $\Z^d$ (and similarly for $(-N, N)$, etc.). If we put $v = (v_1, \ldots, v_d)$ and $a = (a_1, \ldots, a_d) \in \Z^d$, then we will write $a \cdot v$ for the sum $\sum_{i=1}^d a_i v_i \in Z$. With this notation, we can concisely write the progression $P$ as $v_0 + [0, N) \cdot v$.

\begin{theorem*}[Freiman's theorem]
Let $Z$ be a torsion-free abelian group and $C > 0$ be a constant. Then there exist constants $K$ and $d$ such that whenever a finite set $A \sub Z$ satisfies $|A + A| \leq C |A|$, there exists a proper progression $P$ of rank at most $d$ that contains $A$ and $|P| / |A| \leq K$.
\end{theorem*}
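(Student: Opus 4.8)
The plan is to follow Ruzsa's approach and first reduce to the case $Z = \Z$. As $A$ is finite it generates a finitely generated torsion-free subgroup, necessarily isomorphic to $\Z^r$ for some $r$; a generic linear projection $\Z^r \to \Z$ is injective on the finite set $8A - 8A$ and hence restricts to a Freiman isomorphism of order $8$ on $A$, preserving all the sumset data we shall need, so I may assume $A \sub \Z$. The first quantitative input is the Pl\"unnecke--Ruzsa inequality, which upgrades the hypothesis $\abs{A + A} \leq C\abs{A}$ to bounds $\abs{kA - lA} \leq C^{k+l}\abs{A}$ for all $k, l \in \N$, giving uniform control of short iterated sumsets by a constant depending only on $C$. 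Using the Ruzsa modelling lemma I then transport the problem into a cyclic group: there is a prime $N = O(\abs{A})$ and a Freiman isomorphism of order $8$ carrying $A$ onto a subset $A' \sub \Z / N\Z$ with $\abs{A'} = \abs{A}$, where Fourier analysis becomes available.

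The heart of the argument is Bogolyubov's lemma. Working in $\Z / N\Z$, and writing $\alpha = \abs{A'} / N$ for the density (which is bounded below in terms of $C$, since $N = O(\abs{A})$), one shows by an $L^2$ Fourier computation that $2A' - 2A'$ contains a Bohr set $B(S, \rho) = \set{x : \abs{\xi x / N} < \rho \text{ for all } \xi \in S}$ with $\abs{S}$ and $1 / \rho$ bounded in terms of $C$ alone; this is the step where small doubling is genuinely exploited, as it forces the indicator of $A'$ to have few large Fourier coefficients. A geometry-of-numbers argument, applying Minkowski's second theorem to the lattice generated by the vectors $(\xi x)_{\xi \in S}$, then shows that any such Bohr set contains a proper generalized arithmetic progression of rank at most $\abs{S}$ and size at least a fixed fraction of $N$.

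Pulling this progression back through the Freiman isomorphism produces a proper progression $Q \sub 2A - 2A$ with $\rank Q \leq d$ and $\abs{Q} \geq c\abs{A}$ for constants $c, d$ depending only on $C$. To finish I would invoke the Ruzsa covering lemma: since $\abs{A + Q} \leq \abs{3A - 2A} = O(\abs{A}) = O(\abs{Q})$, the set $A$ can be covered by a bounded number of translates of $Q - Q$. As $Q - Q$ is itself a progression of rank $\leq d$, the union of these translates lies in a single generalized progression of bounded rank and cardinality $O(\abs{A})$ containing $A$; a final lemma replacing any progression by a proper one of comparable size and controlled rank then yields the proper $P$ demanded by the statement, with $K$ and $d$ depending only on $C$.

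I expect the main obstacle to be the combined Bogolyubov and geometry-of-numbers step. Merely knowing that $2A' - 2A'$ is large is not enough; one must extract from it a proper progression of \emph{bounded} rank, and controlling the interaction of the parameters $\abs{S}$, $\rho$, and $N$ through Minkowski's theorem is exactly where the final dependence of $K$ and $d$ on $C$ is pinned down. The bookkeeping in the covering and properness cleanup at the end is routine by comparison.
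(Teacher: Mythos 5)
The paper offers no proof of this statement: Freiman's theorem is quoted as a known result, with references to Freiman's original paper and to Ruzsa's modern proof (Ruzsa 1994, Tao--Vu Chapter~5, Green's exposition), and your outline is precisely that standard Ruzsa argument---Pl\"unnecke--Ruzsa, the modelling lemma, Bogolyubov plus geometry of numbers, then the covering lemma and a properness cleanup---so it matches the proof the paper defers to. One small correction: with $N = O_C(|A|)$ the Ruzsa modelling lemma only yields a Freiman $8$-isomorphic copy of a subset $A_1 \subseteq A$ with $|A_1| \geq |A|/8$, not of $A$ itself; this is harmless, since the progression it produces still lies in $2A - 2A$ and has size $\gg_C |A|$, which is exactly what your covering step requires.
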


The original proof of the theorem can be found in \cite{Freiman1966}; for a modern treatment due to Ruzsa, see Ruzsa~\cite{Ruzsa1994}, Tao--Vu~\cite{Tao2006}*{Chapter~5}, or the self-contained exposition Green~\cite{Green2002p}.

We will also need some basic notions and facts from the geometry of numbers. Recall that a \df{lattice} in $\R^d$ is a discrete subgroup. The \df{rank} of a lattice is the dimension of the subspace of $\R^d$ that it spans. A subset $B \sub \R^d$ is \df{symmetric} if $B = -B$. We denote by $\vol$ the $d$-dimensional Lebesgue measure. The following lemma goes back to Minkowski and follows for example from \cite{Tao2006}*{Theorem~3.30}; in order to avoid introducing additional notation, we supply the easy proof.
\begin{lemma} \label{l:small-conv-set}
Let $B \sub \R^d$ be an open, symmetric, convex set and $\Gamma < \R^d$ be a lattice of full rank. If $\vol(B) < (2^d / d!) \vol(\R^d / \Gamma)$, then $\dim \Span B \cap \Gamma < d$.
\end{lemma}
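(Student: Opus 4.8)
The plan is to read the statement as a variant of Minkowski's first theorem and prove its contrapositive. That is, I would assume that the lattice points lying in $B$ span all of $\R^d$, i.e.\ $\dim \Span (B \cap \Gamma) = d$, and derive the lower bound $\vol(B) \geq (2^d/d!)\vol(\R^d / \Gamma)$, contradicting the hypothesis. The geometric content is that a symmetric convex set large enough to contain $d$ linearly independent lattice points already contains a full cross-polytope, whose volume is controlled from below by the covolume of the lattice.

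First I would dispose of the degenerate case: if $B = \emptyset$ then $\Span(B \cap \Gamma) = \set{0}$ and there is nothing to prove, and otherwise, since $B$ is symmetric and convex, it contains $0$ (as $\tfrac12(x + (-x))$ for any $x \in B$). Then, under the assumption $\dim \Span(B \cap \Gamma) = d$, I would choose $d$ linearly independent vectors $w_1, \ldots, w_d \in B \cap \Gamma$. Because $B$ is convex and contains $0$ together with each $\pm w_i$, it contains their convex hull, the cross-polytope $K = \mathrm{conv}\set{\pm w_1, \ldots, \pm w_d}$, so that $\vol(B) \geq \vol(K)$.

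The key computation is that the volume of this generalized octahedron is
\[ \vol(K) = \frac{2^d}{d!}\,\abs{\det(w_1, \ldots, w_d)}, \]
which one checks for the standard cross-polytope $\set{x : \sum_i \abs{x_i} \leq 1}$ (of volume $2^d/d!$) and transports along the linear map sending $e_i \mapsto w_i$, whose Jacobian is $\abs{\det(w_1, \ldots, w_d)}$. To finish, the sublattice $\Gamma' = \genby{w_1, \ldots, w_d}$ has full rank and covolume $\vol(\R^d / \Gamma') = \abs{\det(w_1, \ldots, w_d)}$, and since $\Gamma' \leq \Gamma$ we have $\vol(\R^d / \Gamma') = [\Gamma : \Gamma'] \cdot \vol(\R^d / \Gamma) \geq \vol(\R^d / \Gamma)$. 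Combining the three displays yields $\vol(B) \geq (2^d/d!)\vol(\R^d / \Gamma)$, the desired contradiction. I expect no conceptual obstacle here, only bookkeeping: justifying the containment $K \subseteq B$ purely from convexity and symmetry, and correctly identifying $\abs{\det(w_1, \ldots, w_d)}$ with the covolume of $\Gamma'$ so that the index $[\Gamma : \Gamma'] \geq 1$ supplies the final inequality.
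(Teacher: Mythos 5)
Your proof is correct and is essentially the paper's own argument: both hinge on the fact that a symmetric convex set containing $d$ linearly independent lattice vectors contains the cross-polytope on those vertices, whose volume $(2^d/d!)\abs{\det(w_1, \ldots, w_d)}$ dominates $(2^d/d!)\vol(\R^d/\Gamma)$ because the sublattice generated by the $w_i$ has index at least $1$ in $\Gamma$. The paper merely streamlines the bookkeeping by first applying an invertible linear map sending the $w_i$ to the standard basis, so that the determinant becomes $1$ and the covolume comparison reads $\vol(\R^d/\Gamma) \leq 1$; your explicit determinant and index computation is the same step without that normalization.
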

\begin{proof}
Suppose, towards contradiction, that $B \cap \Gamma$ contains $d$ linearly independent vectors $v_1, \ldots, v_d$. By applying an invertible linear transformation of $\R^d$ (which will scale both sides of the given inequality by the same factor), we can assume that $(v_1, \ldots, v_d)$ is in fact the standard basis of $\R^d$. In particular, after this transformation, $\Gamma$ will contain $\Z^d$ and hence, $\vol(\R^d / \Gamma) \leq 1$. On the other hand, $B$, being convex and symmetric, will contain the polyhedron with vertices $\pm v_1, \ldots, \pm v_d$ which has volume $2^d / d!$. This contradicts the hypothesis.
\end{proof}

One last fact which we will need is that the intersection of a convex set with a lattice can be efficiently contained in a progression of rank equal to the rank of the lattice. More precisely, the following holds (see \cite{Tao2006}*{Lemma~3.36}).
\begin{lemma} \label{l:discrete-John}
Let $B$ be a convex, symmetric, open set in $\R^d$ and let $\Gamma < \R^d$ be a lattice of rank $r$. Then there exist a tuple $w= (w_1, \ldots, w_r) \in \Gamma^r$ of linearly independent vectors in $\R^d$ and a tuple $N = (N_1, \ldots, N_r)$ of positive integers such that
\[ (-N, N) \cdot w \sub B \cap \Gamma \sub (-r^{2r} N, r^{2r} N) \cdot w. \]
\end{lemma}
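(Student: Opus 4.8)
The plan is to reduce to a full-rank lattice, replace $B$ by its John ellipsoid so as to work with a genuine Euclidean structure, and then read off the progression from a reduced basis of $\Gamma$. First I would restrict attention to the subspace $V = \Span_\R \Gamma \cong \R^r$, replacing $B$ by $B \cap V$; this is again open, symmetric and convex, and $\Gamma$ becomes a lattice of full rank in $V$. We may assume $B \cap V$ is bounded, since otherwise $B \cap \Gamma$ is infinite and no finite progression could contain it. Consequently the Minkowski gauge $\nm{x}_B = \inf \set{t > 0 : x \in tB}$ is a genuine norm on $V$ and $B = \set{x : \nm{x}_B < 1}$. I would then invoke John's theorem to find an ellipsoid $E$ with $E \sub B \sub \sqrt{r}\, E$, and after an invertible linear change of coordinates assume $E$ is the Euclidean unit ball; it then suffices to produce $w$ and $N$ for the Euclidean norm, absorbing the factor $\sqrt{r}$ into the final constant.

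With a Euclidean norm in hand, I would take $w = (w_1, \ldots, w_r)$ to be a reduced basis of $\Gamma$ (in the sense of Minkowski, or any basis realizing the successive minima $\lambda_1 \le \cdots \le \lambda_r$ up to a dimension-dependent factor), so that $\lambda_i \le \nm{w_i} \le c(r)\lambda_i$ and, crucially, the Gram--Schmidt lengths $\nm{w_i^*}$ are comparable to $\lambda_i$. I would set $N_i$ to be the largest positive integer with $(N_i - 1)\nm{w_i} \le 1/r$, so that each $N_i$ is a positive integer and $N_i$ is of order $1/(r\nm{w_i})$ whenever $w_i$ is short.

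The lower inclusion $(-N, N) \cdot w \sub B \cap \Gamma$ is then immediate: for integers $\abs{n_i} < N_i$ the triangle inequality gives $\nm{\sum_i n_i w_i} \le \sum_i \abs{n_i}\, \nm{w_i} \le \sum_i (N_i - 1)\nm{w_i} \le 1$, with strictness arranged by a harmless tightening of the bound defining $N_i$, so that $\sum_i n_i w_i \in B \cap \Gamma$. The reverse inclusion is the substantive one. Given $x \in B \cap \Gamma$, I would write $x = \sum_i m_i w_i$ with $m_i \in \Z$ — possible precisely because $w$ is a basis of $\Gamma$ — and bound the coordinates by extracting the top coordinate as an orthogonal projection onto the last Gram--Schmidt direction, $\abs{m_r} = \nm{\hat{\pi}_r(x)}/\nm{w_r^*} \le \nm{x}/\nm{w_r^*}$, and then controlling the remaining coordinates by a downward induction. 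Since $\nm{w_i^*}$ is comparable to $\lambda_i \approx \nm{w_i} \approx 1/N_i$, accumulating the reduction losses across all $r$ coordinates together with the $\sqrt{r}$ from John's theorem yields $\abs{m_i} < r^{2r} N_i$, i.e. $x \in (-r^{2r} N, r^{2r} N) \cdot w$.

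The hard part is exactly this coefficient estimate: guaranteeing the existence of a basis whose Gram--Schmidt lengths track the successive minima, and then propagating the single bound $\nm{x}_B < 1$ through all $r$ coordinates so that the loss at each step is multiplicative and bounded by a fixed power of $r$, giving a total blow-up of order $r^{2r}$ rather than something that degrades uncontrollably with the skewness of the basis. Everything else — the reduction to full rank, the passage to a Euclidean norm, and the lower inclusion — is routine by comparison.
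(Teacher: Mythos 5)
The first thing to note is that the paper does not prove this lemma at all: it is quoted from Tao--Vu \cite{Tao2006}*{Lemma~3.36}, so there is no in-paper argument to compare yours against, and your proposal has to be judged on its own terms as a proof of the discrete John theorem. Its architecture is the standard one: restricting to $\Span \Gamma$, Euclideanizing $B$ via John's theorem at the cost of a factor $\sqrt{r}$, and reading the progression off a lattice basis is exactly how this result is usually proved, and your choice of $N_i$ and the lower inclusion are fine. Your boundedness remark is also correct and in fact repairs the statement: with $B$ merely open the lemma as written is false (take $B = \R^d$, $\Gamma = \Z^d$: then $B \cap \Gamma$ is infinite while the right-hand side is always finite); ``body'' in the cited source includes boundedness.

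The genuine gap is the step you yourself flag as the crux, which is asserted rather than proved, and where the parenthetical ``in the sense of Minkowski, or any basis realizing the successive minima up to a dimension-dependent factor'' is not good enough. What you need is a lower bound $\nm{w_i^*} \geq \lambda_i / L(r)$ with a loss $L(r)$ that survives the induction and stays under $r^{2r}$. This does \emph{not} follow formally from $\nm{w_i} \leq c(r)\lambda_i$: the implication holds only through Minkowski's second theorem, via $\prod_i \nm{w_i^*} = \det\Gamma \geq 2^{-r}\vol(B)\prod_i \lambda_i$ and $\nm{w_j^*} \leq \nm{w_j}$, which gives $\nm{w_i^*} \geq \lambda_i \vol(B)\, 2^{-r} c(r)^{-(r-1)}$ --- so the norm-comparability constant gets raised to the power $r-1$. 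Consequently a Mahler-type basis ($\nm{w_i} \leq \max(1, i/2)\lambda_i$), or an LLL or Korkine--Zolotareff reduced basis, works, but Minkowski reduction --- the one notion you name concretely --- has known guarantees only exponential in $r$, and $c(r)^{r-1}$ then becomes $2^{O(r^2)}$, overshooting $r^{2r}$. Two further ingredients you use tacitly must be made explicit: the basis should be size-reduced ($\abs{\mu_{i,j}} \leq 1/2$) so that each step of your downward induction loses only a bounded factor, and one needs the monotonicity of the $\lambda_i$ to turn the accumulated bound $\abs{m_j} \leq \sum_{i \geq j}(3/2)^{i-j}\nm{x}/\nm{w_i^*}$ into a multiple of $1/\lambda_j$, hence of $N_j$. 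With these supplied the argument does close, comfortably inside $r^{2r}$; but as written, the decisive estimates are exactly the ones left unproved.
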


\section{Proof for the case of the rationals} \label{s:proof}
Let $\Sigma$ be a finite alphabet. If $L \sub \Sigma^*$, denote by $L^{\leq n}$ the set of words in $L$ of length not greater than $n$. We will need the following two basic lemmas (for proofs, see, for example, \cite{Khoussainov2007}). The first one is a general fact about the growth of regular languages and the second is a version of the pumping lemma particularly suitable for studying automatic structures.

\begin{lemma} \label{l:exp-growth}
Let $L \sub \Sigma^*$ be a regular language. Then there exists a constant $C$ such that $|L^{\leq n+1}| \leq C |L^{\leq n}|$ for all $n$.
\end{lemma}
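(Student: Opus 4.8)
The plan is to reduce the claim to a single pumping estimate on a deterministic finite automaton and to make that estimate effective by a careful fiber count. Since $L$ is regular, fix a deterministic finite automaton recognizing it, with state set $Q$ and $q = \abs{Q}$, and for each $m$ let $a_m$ denote the number of words in $L$ of length exactly $m$, so that $\abs{L^{\leq n+1}} = \abs{L^{\leq n}} + a_{n+1}$. It therefore suffices to produce a constant $C_0$, depending only on the automaton, with $a_{n+1} \leq C_0\,\abs{L^{\leq n}}$ for all large $n$; the inequality $\abs{L^{\leq n+1}} \leq (1+C_0)\abs{L^{\leq n}}$ then follows, and the finitely many small values of $n$ (those below $q-1$, on which $\abs{L^{\leq n}}$ is positive) contribute only finitely many ratios, so enlarging the constant to their maximum handles all $n$.

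To bound $a_{n+1}$, I would define a map from the words of $L$ of length $n+1$ into $L^{\leq n}$ by deleting a loop in the accepting run. Given $w = \sigma_1 \cdots \sigma_{n+1} \in L$ with $n+1 \geq q$, run the automaton to obtain the state sequence $q_0, q_1, \ldots, q_{n+1}$ ending in an accepting state. Among the last $q+1$ states $q_{n+1-q}, \ldots, q_{n+1}$ the pigeonhole principle forces a repetition $q_i = q_j$ with $n+1-q \leq i < j \leq n+1$; fixing any canonical rule for choosing such a pair, I delete the factor $\sigma_{i+1}\cdots\sigma_j$ to obtain a word $w'$. Because $q_i = q_j$, the run on $w'$ is again accepting, so $w' \in L$, and its length is $n+1-(j-i)$, which lies between $n+1-q$ and $n$; hence $w' \in L^{\leq n}$.

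The crux, and the step I expect to be the main obstacle, is bounding the size of the fibers of this map independently of $n$: naively the deleted loop could sit anywhere in a word of length $n+1$, giving about $n$ possible positions and ruining the estimate. This is exactly why the loop is taken within the last $q+1$ states. To reconstruct $w$ from $w'$ one needs only the deleted factor, a word over $\Sigma$ of length between $1$ and $q$, together with the position at which to reinsert it; and since the cut always lies in the final block, the suffix of $w'$ following it has length at most $q-1$, so the position is one of at most $q$ values. Thus each fiber has at most $C_0 := q \sum_{k=1}^{q} \abs{\Sigma}^k$ elements, a constant independent of $n$, and the injection-up-to-bounded-multiplicity yields $a_{n+1} \leq C_0\,\abs{L^{\leq n}}$ for every $n \geq q-1$, completing the argument.
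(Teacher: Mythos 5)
Your proof is correct and is the standard argument for this fact: the paper itself does not prove the lemma but refers to \cite{Khoussainov2007}, where it is established by the same pumping device you use --- delete a short loop from the accepting run and bound the fibers of the resulting map --- so your proposal matches the intended proof (your only twist, locating the repeated state among the last $q+1$ states rather than the first ones, is immaterial and serves the same purpose of keeping the number of reinsertion positions bounded). One caveat, which concerns the statement rather than your argument: as literally stated the lemma is false whenever $L^{\leq n} = \emptyset$ but $L^{\leq n+1} \neq \emptyset$ (e.g.\ $L = \set{ab}$ and $n = 1$), so your implicit restriction to those $n$ with $\abs{L^{\leq n}} > 0$ is unavoidable; it is also exactly what the paper needs, since the lemma is only applied to the sets $D^{\leq n}$ with $n \geq l_0$, all of which contain $D^{\leq l_0} \neq \emptyset$.
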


\begin{lemma} \label{l:finite-sections}
Let $L_1, L_2$ be languages over a finite alphabet and $R \sub L_1 \times L_2$ be a regular relation such that the sections $R_x = \set{y \in L_2 : (x, y) \in R}$ are finite. Then for all $(x, y) \in R$, $\len(y) \leq \len(x) + k$, where $k$ is the number of states of an automaton recognizing $R$.
\end{lemma}

Suppose now that $(Z, +)$ is an FA-presentable abelian group and fix some automatic presentation of it; that is, fix a regular language $D \sub \Sigma^*$ and a bijection $g \colon D \to Z$ such that the preimage under $g$ of the graph of addition is recognizable by an automaton with, say, $r$ states. We will often identify $Z$ and $D$ via $g$. For example, when we write $A + B$ for some $A, B \sub D$, we mean the set $\set{g^{-1}(g(a) + g(b)) : a \in A, b \in B}$. By applying Lemma~\ref{l:finite-sections} to the graph of addition, one immediately obtains that $D^{\leq n} + D^{\leq n} \sub D^{\leq n+r}$. Also, the graph of the homomorphism $M_p \colon Z \to Z$ defined by $M_p(x) = px$, where $p$ is an integer, is regular. Let $h(p)$ be the minimal number of states of an automaton recognizing the graph of $M_p$. (Using the fact that one can compute $M_p(x)$ using no more than $O(\log p)$ additions, one sees that $h(p) = p^{O(1)}$ but we will not need this.) If $A \sub Z$, denote by $p^{-1}A$ the set $M_p^{-1}(A)$. If $M_p$ has finite kernel (for example, if $Z$ is torsion-free), Lemma~\ref{l:finite-sections} implies that $p^{-1} D^{\leq n} \sub D^{\leq n + h(p)}$.

Let $l_0 = \min \set{l \in \N : 0 \in D^{\leq l} \And |D^{\leq l}| \geq 2}$ and put $A_n = D^{\leq l_0 + nr}$ for $n = 0, 1, \ldots$. We summarize the properties of the sets $A_n$ that we have established so far (under the assumption that the homomorphism $M_p$ has a finite kernel). There exist a constant $C_1$ and a function $h \colon \N \to \N$ such that:
\begin{enumerate} \romanenum
\item $0 \in A_0$ and $|A_0| \geq 2$; \label{i:A0}
\item $A_n + A_n \sub A_{n+1}$; \label{i:double}
\item $|A_{n+1}| \leq C_1 |A_n|$; \label{i:growth}
\item $p^{-1} A_n \sub A_{n + h(p)}$. \label{i:hp}
\end{enumerate}
The property \eqref{i:growth} follows from Lemma~\ref{l:exp-growth}. In particular \eqref{i:double} and \eqref{i:growth} imply that
\begin{equation} \label{eq:small-doubling}
|A_n + A_n| \leq C_1 |A_n| \quad \text{for all } n.
\end{equation}

Now we can formulate our main combinatorial result which, by the above observations, implies Theorem~\ref{th:main} for the case of the rationals.
\begin{theorem} \label{th:main-comb}
There does not exist a sequence $\set{A_n}_{n \in \N}$ of finite subsets of $\Q$ that satisfy the conditions \eqref{i:A0}--\eqref{i:hp} above.
\end{theorem}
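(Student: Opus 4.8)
The plan is to assume that such a sequence $\set{A_n}$ exists and to derive a contradiction from the uniform bound on the Freiman rank of its members. First I would record two easy consequences of the hypotheses. From \eqref{i:A0} and \eqref{i:double} one sees that $\abs{A_n} \to \infty$: the sizes $\abs{A_n}$ form a nondecreasing sequence, so if they did not tend to infinity they would be eventually constant, whence by monotonicity $A_n$ would eventually equal $A_{n+1}$ and satisfy $A_n + A_n \sub A_n$; thus $A_n$ would be a finite subset of $\Q$ containing $0$ and closed under addition, i.e.\ a finite subsemigroup of a torsion-free group, which can only be $\set{0}$, contradicting $\abs{A_0} \geq 2$. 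Second, \eqref{i:double} and \eqref{i:growth} give the small-doubling estimate \eqref{eq:small-doubling}, so Freiman's theorem applies uniformly: there are constants $d$ and $K$, depending only on $C_1$, such that every $A_n$ is contained in a proper progression $P_n$ of rank at most $d$ with $\abs{P_n} \leq K \abs{A_n}$. Thus every set occurring in the sequence has Freiman rank at most the fixed integer $d$.

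Next I would translate the divisibility property \eqref{i:hp} into a statement about sums of dilates. Since $A_n \sub A_{n + h(p)}$ and $p^{-1} A_n \sub A_{n + h(p)}$, property \eqref{i:double} gives $A_n + p^{-1} A_n \sub A_{n + h(p) + 1}$. After rescaling so that $\genby{A_n} = \Z$, the set $A_n$ becomes a set of integers of some diameter $D_n$, and I would use the arithmetic observation that for any prime $p > D_n$ the map $(a, b) \mapsto a + p^{-1} b$ is injective on $A_n \times A_n$, since the fractional part of $a + p^{-1}b$ recovers $b$ and then $a$. Hence $A_n + p^{-1} A_n$ is a \emph{scale-separated} copy of the direct sum $A_n \oplus A_n$, and it lives inside $A_{n + h(p) + 1}$. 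Because $\Q$ is $p$-divisible for every prime and there are infinitely many primes exceeding $D_n$, such a $p$ is always available; note that this step uses the hypothesis essentially, as it would fail for a group divisible by only finitely many primes.

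Iterating this with a rapidly increasing sequence of primes $p_1 < p_2 < \cdots$, all larger than the relevant diameters, I would form $S_m = A_n + p_1^{-1} A_n + (p_1 p_2)^{-1} A_n + \cdots + (p_1 \cdots p_m)^{-1} A_n$. The separation of scales makes the defining sum injective, so $S_m$ is a scale-separated copy of the $(m+1)$-fold direct sum of $A_n$; iterating \eqref{i:hp} and \eqref{i:double} places $S_m$ inside some $A_{n + H_m}$, which again has Freiman rank at most $d$. The contradiction I am aiming for is that such an $(m+1)$-fold scale-separated product cannot be \emph{efficiently} contained in a progression of rank $d$ once $m$ is large: each new scale ought to contribute an independent generator, so the rank of any proper progression covering $S_m$ with bounded density should grow at least linearly in $m$ and eventually exceed $d$.

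The main obstacle is precisely to make this rank lower bound rigorous, and this is where the geometry of numbers enters. Minimal covering rank by itself is not super-additive under scale separation — a product set can always be swallowed by a single very long arithmetic progression — so one must exploit the efficiency estimate $\abs{P_n} \leq K \abs{A_n}$ that Freiman supplies. I would model a hypothetical low-rank efficient cover of $S_m$ by a lattice and a symmetric convex body in $\R^r$, arrange that the $m+1$ separated scales produce $m+1$ linearly independent lattice vectors inside a convex body whose volume is controlled by the efficiency, and then invoke Lemma~\ref{l:small-conv-set}: a body of controlled volume cannot contain a full-rank configuration of lattice points unless its ambient rank is large, forcing $r \geq m+1$. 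Lemma~\ref{l:discrete-John} would be used in the converse direction, to pass between the convex-body picture and the progressions so that the volume bookkeeping is legitimate. Carrying out this volume-versus-rank accounting — showing that separating scales genuinely multiplies a volume invariant while the rank is held fixed — is the crux, and notably it avoids any quantitative control on $h(p)$, which is why no estimate on the growth of $h$ is needed.
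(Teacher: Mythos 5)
Your preparatory steps are correct: $\abs{A_n} \to \infty$, the uniform Freiman bound, the containment $A_n + p^{-1}A_n \sub A_{n+h(p)+1}$, and the injectivity of the scale-separated sum once every $p_i$ exceeds the diameter of the rescaled $A_n$; and your guiding intuition (each new scale should force a new independent direction, detected through Lemmas~\ref{l:small-conv-set} and~\ref{l:discrete-John}) is exactly the intuition behind the paper's key lemma. The gap is in the efficiency bookkeeping, and it breaks the argument. Freiman's theorem covers the ambient set $A_{n+H_m}$ with density $K$ relative to $\abs{A_{n+H_m}}$, \emph{not} relative to $\abs{S_m}$, so the only bound you have on a progression $P \supseteq S_m$ is
\[ \frac{\abs{P}}{\abs{S_m}} \leq K\,\frac{\abs{A_{n+H_m}}}{\abs{S_m}} \leq \frac{K\,C_1^{H_m}}{\abs{A_n}^{m}} \]
by \eqref{i:growth}, and the exponent $H_m \geq \sum_{i\le m} h(p_i)$ is completely out of your control: $h$ is an arbitrary function, each value $h(p_i)$ is revealed only after $p_i$ is chosen, and the primes must be chosen after $n$ (they must exceed the diameter of $A_n$). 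So the hypothesis of your final step --- that $S_m$ is covered ``with bounded density'' --- is simply not available. The failure is not merely technical: after translating and rescaling, $S_m$ lies in the rank-one progression $\set{j(p_1\cdots p_m)^{-1} : 0 \le j \le (m+1)D_n p_1\cdots p_m}$, so the inefficiency that scale separation can force on any cover of $S_m$ of rank $\geq 1$ is at most about $(m+1)D_n\prod_i p_i/\abs{A_n}^{m+1}$; as soon as $C_1^{h(p_m)}$ exceeds $(m+1)D_n\prod_i p_i$ --- which you cannot rule out, since $h(p_m)$ is revealed only after $p_m$ is chosen --- the size budget $K\,C_1^{H_m}\abs{A_n}$ permits even a single long arithmetic progression to swallow $S_m$, and no rank obstruction survives. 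Your closing claim that the approach ``avoids any quantitative control on $h(p)$'' is therefore exactly backwards: it is your structure that would need such control.

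The paper's proof is built precisely to defeat the uncontrolled $h$, and it does so by never placing all the scales into one set whose covering density it cannot certify. Instead it adds one scale at a time to the sets $A_{n_i}$ themselves and trades rank against efficiency: Lemma~\ref{l:incr-rank} shows that adjoining to $A$ a single element $z$ of larger $p$-adic norm forces $\theta(A\cup\set{z},d) \geq \min\set{p^{1/d}/(4d),\ \theta(A,d-1)/d^{C_0 d^3}}$, and the induction \eqref{eq:induction} propagates a lower bound on $\theta(A_{n_i},i)$ for $i=0,1,\ldots,d$. The loss $C^{h(p_i)}$ incurred while waiting $h(p_i)$ steps for the $p_i$-norm to jump is compensated by the prime $p_{i-1}$, which is chosen \emph{after} $h(p_i)$ is known --- that is the whole point of the inductive choice \eqref{eq:choice-primes} --- and the chain closes because only $d+1$ primes are ever needed and the base case costs nothing, since $\theta(\cdot,0)=\infty$. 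In your scheme, by contrast, every scale used incurs an $h$-loss, the gain from each scale is at most polynomial in that prime, and the loss of the last prime chosen is compensated by nothing, so the chain can never close no matter how the primes are picked. Repairing your argument would force you to peel off the scales one at a time and run a rank-versus-efficiency induction with inductively chosen primes --- which is, in substance, the paper's proof.
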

\begin{proof}
We will obtain a contradiction with Freiman's theorem. To that end, we will need a quantitative measure of how efficiently a given additive set is contained in a progression. For a finite additive set $A$ and a rank $d$, define
\[ \theta(A, d) = \min \set{ |P|/|A| : P \supseteq A \text{ and } P \text{ is a proper progression of rank } \leq d }. \]
If there is no $d$-dimensional progression covering $A$ (for example, if $d = 0$), put $\theta(A, d) = \infty$. One property of $\theta$, obvious from the definition, is the following:
\begin{equation} \label{eq:theta-supset}
B \sub A \implies \theta(A, d) \geq \frac{|B|}{|A|} \theta(B, d).
\end{equation}
The next lemma quantitatively formalizes the observation that if we have a progression of integers all of whose elements are divisible by $p$ for some large prime $p$ and add to it a single element not divisible by $p$, then in order to contain the resulting set efficiently in a progression, we need to increase its rank. In order to state the lemma in a slightly more general form that will be useful later, we introduce some notation. If $A$ is a subset of an abelian group, denote by $\genby{A}$ the group generated by $A$.

Recall that if $p$ is a prime, the \df{$p$-adic norm} $\nm{x}_p$ of $x \in \Q \sminus \set{0}$ is defined by
\[ \nm{x}_p = p^m \iff x = p^{-m} a/b, \text{ where } a, b \in \Z \sminus p\Z \And m \in \Z, \]
and $\nm{0}_p = 0$. Note that the $p$-adic norm on $\Q$ has the following properties:
\begin{gather}
\set{\nm{x}_p : x \in \Q} \text{ has no accumulation points other than }0
    \And \forall x \ \lim_{m \to \infty} \nm{p^m x}_p = 0; \label{eq:um1:discrete} \\
\nm{x}_p = \nm{-x}_p \And \nm{x+y}_p \leq \max \set{\nm{x}_p, \nm{y}_p}; \label{eq:um1:ultra} \\
\forall a \in \Z \quad \nm{ax}_p < \nm{x}_p \implies p \mid a. \label{eq:um1:div}
\end{gather}
For a set $A \sub \Q$, let $\nm{A}_p = \sup \set{\nm{a}_p : a \in A}$. If $V \leq \Q$ and $0 < \nm{V}_p < \infty$, let $V_{(p)}$ denote the subgroup $\set{x \in V : \nm{x}_p < \nm{V}_p}$. Note that \eqref{eq:um1:discrete}--\eqref{eq:um1:div} imply the following:
\begin{gather}
\text{ for all } A \sub \Q, \quad \nm{\genby{A}}_p = \nm{A}_p; \\
\nm{x}_p > \nm{y}_p \implies \nm{x+y}_p = \nm{x}_p; \label{eq:um2:pres} \\
0 < \nm{V}_p < \infty \implies [V : V_{(p)}] \geq p. \label{eq:um2:index}
\end{gather}
To see that \eqref{eq:um2:index} holds, note that if $v \in V$ is such that $\nm{v}_p = \nm{V}_p$ (which exists by \eqref{eq:um1:discrete}), then, by \eqref{eq:um1:div}, the elements $0, v, 2v, \ldots, (p-1)v$ of $V$ are in distinct cosets of the subgroup $V_{(p)}$.
\begin{lemma} \label{l:incr-rank}
Let $d \geq 1$ be an integer and $p > d!$ be prime. Let $Z$ be an abelian group equipped with a norm $\nm{\cdot}_p$ satisfying \eqref{eq:um1:discrete}--\eqref{eq:um1:div}. Let $A \sub Z$ be a finite set with at least $2$ elements, and $z \in Z$ be such that $\nm{z}_p > \nm{A}_p$. Then
\begin{equation} \label{eq:l:incr-rank}
 \theta(A \cup \set{z}, d) \geq \min \set{\frac{p^{1/d}}{4d}, \frac{\theta(A, d-1)}{d^{C_0 d^3}} },
\end{equation}
where $C_0$ is an absolute constant.
\end{lemma}
\begin{proof}
If $\theta(A \cup \set{z}, d) = \infty$, there is nothing to prove, so suppose that $\theta(A \cup \set{z}, d) < \infty$. Let
\[ P = \set{ v_0 + \sum_{i=1}^d a_i v_i : 0 \leq a_i < N_i \text{ for } i = 1, \ldots, d}, \]
where $v_0, \ldots, v_d \in Z$, be a proper progression of rank $d$ covering $A \cup \set{z}$ such that $|P| / |A \cup \set{z}| = \theta(A \cup \set{z}, d)$. We will first show that for some $i \geq 1$, $\nm{v_i}_p \geq \nm{z}_p$. Denote by $V_1$ the group generated by $v_1, \ldots, v_d$ and suppose, towards a contradiction, that $\nm{V_1}_p < \nm{z}_p$. We have two cases: either $\nm{v_0}_p < \nm{z}_p$ or $\nm{v_0}_p \geq \nm{z}_p > \nm{V_1}_p$. In the first case, we obtain that, since $P \sub v_0 + V_1$, $\nm{P}_p \leq \nm{v_0 + V_1}_p < \nm{z}_p$ which contradicts the fact that $z \in P$. In the second, by \eqref{eq:um2:pres}, for every $x \in v_0 + V_1$, $\nm{x}_p = \nm{v_0}_p > \nm{A}_p$ which contradicts the fact that $A \sub v_0 + V_1$ and $A$ is non-empty.

Now by reordering $v_1, \ldots, v_d$, we can assume that there exists $k \geq 1$ such that
\[ \nm{v_1}_p, \ldots, \nm{v_k}_p \geq \nm{z}_p \And \nm{v_{k+1}}_p, \ldots, \nm{v_d}_p < \nm{z}_p. \]
Put $M = N_1 N_2 \cdots N_k$. We distinguish the following two cases which will correspond to the two different quantities on the right-hand side of \eqref{eq:l:incr-rank}.

{\bf Case 1.} $M < p/k!$. Note that this case is impossible if $d = 1$. Indeed, we showed that $\nm{v_1}_p > \nm{A}_p$ and since $A$ contains at least two elements, it is easy to see, using \eqref{eq:um1:div}, that if $d = 1$, the interval $[0, N_1)$ must contain two integers whose difference is divisible by $p$ and hence, $M = N_1 \geq p+1$ (for a similar argument, see Case 2 below). Hence, we can assume that $d \geq 2$.

Write $v$ for the vector $(v_1, \ldots, v_k)$ and $N$ for $(N_1, \ldots, N_k)$. Let $f \colon \Z^k \to Z$ be the homomorphism $f(x) = x \cdot v$. Put $V = \genby{v_1, \ldots, v_k} = f(\Z^k)$. Let
\[ \Lambda = \set{y \in Z : \nm{y}_p < \nm{z}_p} \]
and note that $\Lambda$ is a subgroup of $Z$ and $A \sub \Lambda$. Let also $\Gamma$ be the lattice in $\R^k$ given by
\[ \Gamma = \set{x \in \Z^k : f(x) \in \Lambda} \]
and note that since by \eqref{eq:um1:discrete}, for all large enough $m$, $p^m \Z^k \leq \Gamma$, $\Gamma$ has full rank. Let $B$ be the open, symmetric, convex box $\prod_{i=1}^k (-N_i, N_i)$ in $\R^k$. We have $\vol(B) = (2N_1) \cdots (2N_k) = 2^k M$ and
\[ \vol(\R^k / \Gamma) = \vol(\R^k / \Z^k) [\Z^k : \Gamma] \geq p. \]
The last inequality follows from the fact that $\Gamma$ is contained in the kernel of the composition of the surjective homomorphisms
\[ \Z^k \xrightarrow{f} V \to V/V_{(p)} \]
and \eqref{eq:um2:index}. Applying Lemma~\ref{l:small-conv-set} and our hypothesis about $M$ yields that $\Gamma \cap B$ is contained in a sublattice of $\Gamma$ of rank $r < k$. (For the moment, suppose that $k > 1$, so that we can take $r > 0$. We will explain how to deal with the case $k=1$ later.) By Lemma~\ref{l:discrete-John}, there exist tuples $N' = (N_1', \ldots, N_r')$ of positive integers and $w = (w_1, \ldots, w_r) \in \Gamma^r$ such that $w_1, \ldots, w_r$ are independent in $\R^k$ and
\[ (-N', N') \cdot w \sub \Gamma \cap B \sub (-r^{2r}N', r^{2r}N') \cdot w. \]
From the first inclusion and the independence of $w_1, \ldots, w_r$, we have
\begin{equation} \label{eq:est-N'}
   |(-N', N')| = |(-N', N') \cdot w| \leq |B \cap \Gamma| \leq
    |B \cap \Z^k| < 2^k M.
\end{equation}
Let $P_0$ be the progression $\set{\sum_{i=k+1}^d a_i v_i : 0 \leq a_i < N_i}$ and note that $P_0 \sub \Lambda$. Then $P = v_0 + P_0 + f([0, N))$. Note that by the properness of $P$, $|P| = |P_0| \cdot |[0, N)| = M|P_0|$. Let $a^0 = (a_1^0, \ldots, a_k^0) \in [0, N)$ be such that $v_0 + a^0 \cdot v \in \Lambda$ and put $v_0' = a^0 \cdot v$ (since $A \sub P \cap \Lambda$, such an $a^0$ always exists). We have
\[ P = v_0 + P_0 + f([0, N)) = v_0 + P_0 + v_0' + f([-a^0, N-a^0)). \]
Note that $v_0 + v_0' + P_0 \sub \Lambda$. Hence,
\begin{align}  \label{eq:new-progr}
A \sub P \cap \Lambda &= v_0 + v_0' + P_0 + f([-a^0, N-a^0)) \cap \Lambda \notag \\
    &= v_0 + v_0' + P_0 + f([-a^0, N-a^0) \cap \Gamma) \notag \\
    &\sub v_0 + v_0' + P_0 + f(B \cap \Gamma) \notag \\
    &\sub v_0 + v_0' + P_0 + f((-r^{2r}N', r^{2r}N') \cdot w)\notag \\
    &\sub v_0 + v_0' + P_0 + (-r^{2r}N', r^{2r}N') \cdot f(w),
\end{align}
where $f(w) = ((f(w_1), \ldots, f(w_r))$. Denote by $Q$ the progression \eqref{eq:new-progr}. We have that $A \sub Q$, $\rank Q = \rank P_0 + r = d - k + r < d$, and by \eqref{eq:est-N'},
\begin{equation} \label{eq:sizeQ}
 \begin{split}
|Q| &\leq |P_0| \cdot |(-r^{2r}N', r^{2r}N')| \\
  &< |P_0| 2^r r^{2r^2} |(-N', N')| \\
  &< |P_0| 2^r r^{2r^2} 2^k M < 4^k k^{2k^2} |P|.
\end{split}
\end{equation}
Now note that if we had $k = 1$ in the beginning, then $B \cap \Gamma = \set{0}$, so if we take $Q = v_0 + v_0' + P_0$, we will again have $A \sub Q$, $\rank Q < d$, and the estimate \eqref{eq:sizeQ} will still hold.

Of course, the progression $Q$ need not be proper. However, properness can be achieved at the price of increasing its size. Applying \cite{Tao2006}*{Theorem~3.40} yields that we can include $Q$ in a proper progression $Q'$ of equal or lesser rank and size at most $d^{C_0' d^3} |Q|$ for some absolute constant $C_0'$. This allows us to conclude that in this case,
\[ \begin{split}
\theta(A \cup \set{z}, d) &= \frac{|P|}{|A \cup \set{z}|} \geq \frac{|Q|}{2|A| 4^k k^{2k^2}} \\
  &\geq \frac{|Q'|}{2|A|d^{C_0' d^3}4^k k^{2k^2}} \\
  &\geq \frac{\theta(A, d-1)}{d^{C_0 d^3}}
\end{split} \]
for an appropriately chosen $C_0$.

{\bf Case 2.} $M \geq p/k!$. Then for some $i \leq k$, $N_i \geq (p/k!)^{1/k} \geq (p/d!)^{1/d}$. Without loss of generality, we can assume that $N_1 \geq (p/d!)^{1/d}$. Now fix some $(a_2, \ldots, a_d) \in \Z^{d-1}$ and consider the following condition on $a_1$:
\begin{equation} \label{eq:result-in-pZ}
a_1 v_1 + v_0 + \sum_{i=2}^d a_i v_i \in \Lambda.
\end{equation}
Let $a_1', a_1'' \in \Z$ be two values of $a_1$ satisfying \eqref{eq:result-in-pZ}. Since $\nm{v_1}_p > \nm{\Lambda}_p$, by \eqref{eq:um1:div}, we obtain that $p \mid a_1' - a_2''$. Hence the proportion of the numbers $a_1$ in the interval $[0, N_1)$ for which \eqref{eq:result-in-pZ} holds is not greater than $\lceil N_1 / p \rceil / N_1 \leq \max \set{1/N_1, 2/(p+1)}$. Therefore, by properness,
\[ |P \cap \Lambda|/|P| \leq \max \set{(p/d!)^{-1/d}, 2/(p+1)} \leq (p/(2d!))^{-1/d}. \]
Hence in this case,
\[ \theta(A \cup \set{z}, d) = \frac{|P|}{|A \cup \set{z}|} \geq \frac{|P|}{2|A|} \geq \frac{|P|}{2|P \cap \Lambda|} \geq \frac{(p/(2d!))^{1/d}}{2} \geq \frac{p^{1/d}}{4d}. \qedhere \]
\end{proof}

Now we can proceed with the proof of the theorem. Suppose, towards a contradiction, that a sequence of subsets $\set{A_n}$ of $\Q$ satisfying \eqref{i:A0}--\eqref{i:hp} does exist. Let $C = \max \set{C_0, C_1}$, where $C_0$ is the constant from Lemma~\ref{l:incr-rank} and $C_1$ is the constant from \eqref{i:growth}. By \eqref{eq:small-doubling} and Freiman's theorem, there exist constants $K$ and $d$ such that $\theta(A_n, d) \leq K$ for all $n$. Pick inductively a sequence of primes $p_{d} < p_{d-1} < \cdots < p_0$ satisfying the conditions
\begin{equation} \label{eq:choice-primes}
 p_{d} > C(4dK)^d \quad \text{and}
    \quad p_{i-1} > p_i C^{h(p_i)d} d^{Cd^4} \text{ for } i = d, d-1, \ldots, 1.
\end{equation}
Define inductively the sequence of integers $n_0 < n_1 < \cdots < n_{d}$ by
\[ n_0 = 0 \quad \text{and} \quad n_i = \min \set{n : \nm{A_n}_{p_i} > \nm{A_{n_{i-1}}}_{p_i}}
    \quad \text{for } i = 1, \ldots, d. \]
Note that by the properties of the family $\set{A_n}$, $n_i \leq n_{i-1} + h(p_i)$ (indeed, if the norm $\nm{A_{n_{i-1}}}_{p_i}$ is achieved for $z \in A_{n_{i-1}}$, then $\nm{p_i^{-1}z}_{p_i} > \nm{z}_{p_i} = \nm{A_{n_{i-1}}}_{p_i}$ and $p_i^{-1}z \in A_{n_{i-1} + h(p_i)}$). Hence,
\begin{equation} \label{eq:An-est}
|A_{n_i-1}| \leq C^{h(p_i)+1} |A_{n_{i-1}}|.
\end{equation}

We will prove by induction on $i$ that
\begin{equation} \label{eq:induction}
\theta(A_{n_i}, i) > C^{-1} p_i^{1/d}/(4d) \quad \text{for all } i = 0, \ldots, d.
\end{equation}
Applied for $i = d$, \eqref{eq:induction} will yield a contradiction with the choice of $p_{d}$. The case $i = 0$ follows trivially from the definition of $\theta$. Suppose now that $i \geq 1$ and \eqref{eq:induction} holds for $i-1$ in order to prove it for $i$. By the induction hypothesis, \eqref{eq:An-est}, and \eqref{eq:theta-supset},
\begin{equation} \label{eq:Ani-1est}
 \theta(A_{n_i-1}, i-1) \geq C^{-h(p_i)+1} \theta(A_{n_{i-1}}, i-1) > C^{-h(p_i)} p_{i-1}^{1/d}/(4d).
\end{equation}
By the choice of $n_i$, there exists $z \in A_{n_i}$ such that $\nm{z}_{p_i} > \nm{A_{n_i-1}}_{p_i}$. Apply Lemma~\ref{l:incr-rank} to the set $A_{n_i-1} \cup \set{z}$ and the prime $p_i$ to obtain
\[ \begin{split}
\theta(A_{n_i}, i) &> C^{-1} \theta(A_{n_i-1} \cup \set{z}, i) \\
  &\geq C^{-1} \min \set{ p_i^{1/d}/(4d), \theta(A_{n_i-1}, i-1)/d^{Cd^3} }.
\end{split} \]
The choice of $p_{i-1}$ and \eqref{eq:Ani-1est} allow us to conclude that $p_i^{1/d}/(4d) \leq \theta(A_{n_i-1}, i-1)/d^{Cd^3}$ which completes the induction and the proof.
\end{proof}

\begin{remark} \label{rm:inf-rank}
Note that Freiman's theorem gives another way to see that a torsion-free abelian group of infinite rank is not FA-presentable (originally proved in \cite{Khoussainov2007}). Indeed, if one considers the sets $D^{\leq n}$ as above, by applying Freiman's theorem, one obtains that there is a constant $d$ such that each $D^{\leq n}$ is contained in a progression of rank $d$. Since the group generated by a progression of rank $d$ has rank at most $d+1$, this leads to a contradiction. In fact, for this argument, instead of Freiman's theorem, one can use the much simpler Freiman lemma \cite{Tao2006}*{Lemma~5.13}.
\end{remark}

\section{Other groups} \label{s:other}
\subsection{The torsion-free case}
The proof above can be used to show that certain other torsion-free abelian groups also do not have an automatic presentation. Recall that an abelian group $Z$ is called \df{$p$-divisible} if all of its elements are divisible by $p$, i.e., for all $x \in Z$, there exists $y \in Z$ such that $p y = x$. It is easy to extend the proof in Section~\ref{s:proof} to cover all torsion-free groups that are $p$-divisible for infinitely many $p$. Let $Z$ be such a group. If $Z$ has infinite rank, then $Z$ is not FA-presentable by \cite{Khoussainov2007} (cf. Remark~\ref{rm:inf-rank} above). Otherwise, $Z$ can be embedded as a subgroup of $\Q^k$ for some finite $k$. For $x = (x_1, \ldots, x_k) \in \Q^k$, define its $p$-adic norm by
\[ \nm{x}_p = \max \set{\nm{x_1}_p, \ldots, \nm{x_k}_p}. \]
It is easy to check that this norm satisfies \eqref{eq:um1:discrete}--\eqref{eq:um1:div}, hence Lemma~\ref{l:incr-rank} applies. In order to complete the rest of the proof of Theorem~\ref{th:main-comb}, one just has to choose the primes $p_1, \ldots, p_d$ in \eqref{eq:choice-primes} so that $Z$ is $p_i$-divisible for each $i$. That can be done because, by assumption, there are infinitely many such primes. This completes the proof of Theorem~\ref{th:main} \eqref{eq:main:div}.

\subsection{The torsion case}
One has to be slightly more careful in the torsion case but the proof in Section~\ref{s:proof} still goes through for some torsion groups. Let $I$ be some infinite set of primes and put $T_I = \boplus_{p \in I} \Z(p^\infty)$. (In the special case when $I$ is the set of all primes, $T_I = \Q / \Z$.) For $p \in I$, one can define the $p$-adic norm for $x \in T_I$ by
\[ \nm{x}_p = \ord \pi_p(x), \]
where $\pi_p \colon T_I \to \Z(p^\infty)$ is the natural projection and $\ord z$ denotes the order of $z$ (with the special agreement that $\ord 0 = 0$). This is not really a norm (in the sense that $\set{x \in T_I : \nm{x}_p = 0}$ is a non-trivial subgroup of $T_I$) but it still satisfies \eqref{eq:um1:discrete}--\eqref{eq:um1:div} which is all we need for Lemma~\ref{l:incr-rank} to hold.

Freiman's theorem is also available for arbitrary abelian groups (Green--Ruzsa~\cite{Green2007}; see also \cite{Tao2006}*{Theorem~5.44}). The only difference is that now in the conclusion of the theorem, one obtains coset progressions instead of ordinary progressions. A \df{coset progression} in an abelian group $Z$ is a subset of the form $H + P$, where $H$ is a finite subgroup of $Z$, $P$ is a proper progression as defined previously, and the sum is direct, i.e., every element of $H + P$ can be represented in a unique fashion as a sum $h + p$, where $h \in H$ and $p \in P$. Since every finite subgroup of $T_I$ is cyclic and every finite cyclic group is a one-dimensional progression, every coset progression of rank $d$ in $T_I$ can be written as a proper progression of rank $d+1$.

The conditions \eqref{i:A0}--\eqref{i:hp} for the sets $A_n$ are still satisfied because the homomorphisms $M_p \colon T_I \to T_I$, $x \mapsto px$ have finite kernels for all primes $p$ . Also, one has to ensure that the primes $p_1, \ldots, p_d$ in \eqref{eq:choice-primes} are in the set $I$ which can be achieved because $I$ is infinite. The rest of the proof goes through unchanged.

\bibliography{mybiblio}
\end{document}